\documentclass[12pt]{elsarticle}
\usepackage[utf8]{inputenc}
\usepackage[english]{babel}
\usepackage{amsmath}
\usepackage{amssymb}
\usepackage{amsfonts}
\usepackage{mathtools}
\usepackage{dsfont}
\usepackage{rotating}
\usepackage{graphicx}
\usepackage{floatflt,epsfig}
\usepackage{lineno,hyperref}
\usepackage{enumerate}
\usepackage{colortbl}
\usepackage{array,tabularx,tabulary,booktabs}
\usepackage{longtable}
\usepackage{multirow}
\usepackage{wrapfig}
\usepackage{subcaption}
\usepackage[table]{xcolor}
\newcolumntype{^}{>{\currentrowstyle}}

\usepackage{amsthm}
\usepackage[left=1in,right=1in,top=1in,bottom=1in]{geometry}

\usepackage{amssymb,amsthm}

\newtheorem{lemma}{Lemma}[section]
\newtheorem{theorem}[lemma]{Theorem}
\newtheorem{corollary}[lemma]{Corollary}

\newtheorem{remark}[lemma]{Remark}

\journal{Designs, Codes and Cryptography}
\begin{document}
\renewcommand{\abstractname}{Abstract}
\renewcommand{\refname}{References}
\renewcommand{\tablename}{Table}
\renewcommand{\arraystretch}{0.9}
\thispagestyle{empty}
\sloppy

\begin{frontmatter}
\title{Divisible design graphs obtained by plugging a difference set into a construction for antipodal distance-regular graphs of diameter 3}

\author[02]{Bart De Bruyn}
\ead{Bart.DeBruyn@UGent.be}

\author[01]{Sergey Goryainov}
\ead{sergey.goryainov3@gmail.com}

\author[03]{Ruilin Ma}
\ead{2868414086@qq.com}

\author[03]{Ruihan Xie}
\ead{3205761192@qq.com}

\address[01] {School of Mathematical Sciences, Hebei International Joint Research Center for Mathematics and Interdisciplinary Science, Hebei Key Laboratory of Computational Mathematics and Applications, Hebei Workstation for Foreign Academicians, Hebei Normal University, Shijiazhuang  050024, P.R. China}
\address[02] {Ghent University, Department of Mathematics, Computer Science and Statistics, Krijgslaan 299 - S9, 9000 Gent, Belgium}
\address[03] {School of Mathematical Sciences, Hebei Normal University, Shijiazhuang 050024, P.R. China}


\begin{abstract}
In this paper, we present a new construction of divisible design graphs with new parameters, obtained by plugging a difference set of a quotient group into a known construction of antipodal distance-regular graphs of diameter 3. Also, we show that in characteristic 2 the new divisible design graphs are Cayley graphs over an elementary abelian 2-group.
\end{abstract}

\begin{keyword}
divisible design graph; distance-regular graph; Cayley graph; difference set
\vspace{\baselineskip}
\MSC[2010] 05B05\sep 05B10\sep 05C25 \sep 05E30 \sep 51A50
\end{keyword}
\end{frontmatter}

\section{Introduction} 
A \emph{divisible design graph} (a DDG for short) with parameters $(v,k,\lambda_1,\lambda_2,m,n)$ is a $k$-regular graph on $v$ vertices such that its vertex set can be partitioned into $m$ classes of size $n$ with the following two properties: any two distinct vertices from the same class have precisely $\lambda_1$ common neighbours and any two vertices from different classes have precisely $\lambda_2$ common neighbours. The partition from the definition of a DDG is called the \emph{canonical partition}. DDGs were introduced in \cite{HKM11,M08} as a bridge between graph theory and design theory (the adjacency matrix of a divisible design graph can be regarded as the incidence matrix of a group divisible design \cite{B77}) and have been studied in \cite{BDG25,BG24,CH14,CS22,DGHS24,GK24,GK25,GHKS19,GHKL25,K22,K23,K26,KS21,MR26,P22,PS22,R26,S21,T22}.

This paper is organised as follows. In Section \ref{sec:prelim}, we give preliminary definitions and results. In Section \ref{sec:construction}, we present a new construction of divisible design graphs such that the number of vertices for each of them is a prime power. In Section \ref{sec:SomeGraphsAreCayley}, we show that the constructed divisible design graphs whose number of vertices is a power of 2 are Cayley graphs over an elementary abelian 2-group. 

\section{Preliminaries}\label{sec:prelim}
In this section we give preliminary definitions and results.

\subsection{Distance-regular graphs}
A \emph{distance-regular graph} is a connected regular graph such that for any two vertices $x$ and $y$, the number of vertices at distance $j$ from $x$ and at distance $\ell$ from $y$ depends only upon $j$, $\ell$, and the distance between $x$ and $y$.
The \emph{intersection array} of a distance-regular graph is the array $(b_{0},b_{1},\ldots ,b_{d-1};c_{1},\ldots ,c_{d})$ in which $d$ is the diameter of the graph and for each $0\leq j\leq d$, $b_{j}$ gives the number of neighbours of $y$  at distance $j + 1$ from $x$, $c_{j}$ gives the number of neighbours of $y$ at distance $j-1$ from $x$ for any pair of vertices $x$ and $y$ at distance $j$. There is also the number $a_j$ that gives the number of neighbours of $y$ at distance $j$ from $x$. The numbers $a_j$, $b_j$, $c_j$ are called the \emph{intersection numbers} of the graph. They satisfy the equation $a_{j}+b_{j}+c_{j}=k$, where $k = b_0$ is the valency, that is, the number of neighbours, of any vertex.

A distance-regular graph of diameter $d$ is called \emph{antipodal} if the relation on its vertex set defined by the rule ``to coincide or to be at distance $d$'' is an equivalence relation; the equivalence classes are then called \emph{antipodal classes}. Note that whether a distance-regular graph is antipodal or not can be told \cite[Proposition 4.2.2(ii)]{BCN89} from its intersection array.

The following theorem gives an infinite family of antipodal distance-regular graphs of diameter 3.

\begin{theorem}[{\cite[p.385, Remark (iv)]{BCN89}}]\label{thm:SymplecticCovers1}
Let $q$ be a prime power. 
Let $V$ be a vector space of arbitrary even positive dimension $2t$ over $\mathbb{F}_q$ provided with a nondegenerate symplectic form $B$. Let $A$ be a subgroup of index $r > 1$ in the additive group of $\mathbb{F}_q^+$. Then the graph $\Gamma(2t,q,A,r)$ with the vertex set $\{(\alpha+A,x) : \alpha \in \mathbb{F}_q, ~x \in V\}$ where $(\alpha+A,x) \sim (\beta+A,y)$ if and only if $B(x,y) \in \alpha - \beta + A$ is an antipodal distance-regular graph with $r\cdot q^{2t}$ vertices and intersection array $\{q^{2t}-1,(r-1)q^{2t}/r,1;1,q^{2t}/r,q^{2t}-1\}$.   
\end{theorem}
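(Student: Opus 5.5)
Write $\Gamma=\Gamma(2t,q,A,r)$ and $Q=q^{2t}$. The plan is to compute directly, for every pair of vertices, the number of common neighbours. From these counts we read off the distance distribution and hence the intersection array. Everything reduces to counting solutions of affine-linear conditions on $V$, using that $x\mapsto B(x,\cdot)$ is an isomorphism $V\to V^*$ because $B$ is nondegenerate.

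\textbf{Adjacency is well defined and symmetric.} The condition $B(x,y)\in\alpha-\beta+A$ depends only on the cosets $\alpha+A,\beta+A$. Since $B(y,x)=-B(x,y)$ and $A=-A$, the relation is symmetric. It has no loops because $B(x,x)=0\notin \alpha-\alpha+A$ is false only when\ldots{} actually $0\in A$. So vertices $(\alpha+A,x)$ with $x$ arbitrary would be adjacent to themselves; we therefore interpret the graph as loopless, i.e.\ adjacency requires distinct vertices, which forces $y\neq x$.

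\textbf{Valency and antipodal classes.} Fix $(\alpha+A,x)$. For each $y\ne x$, the value $B(x,y)$ lies in exactly one coset of $A$, so there is exactly one class $\beta+A$ making $(\beta+A,y)$ adjacent to $(\alpha+A,x)$. For $y=x$ we would need $\beta+A=\alpha+A$, which is the vertex itself. Hence the valency is $k=Q-1$. The fibres $F_x=\{(\gamma+A,x)\}$ have size $r$, and distinct vertices in the same fibre are nonadjacent. The vertex $(\alpha+A,x)$ has exactly one neighbour in each fibre $F_y$ with $y\neq x$, so $\Gamma$ is an $r$-fold cover of $K_Q$.

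\textbf{Common neighbours.} Take vertices $u=(\alpha+A,x)$ and $w=(\beta+A,y)$ with $x\ne y$. A common neighbour $(\gamma+A,z)$ with $z\ne x,y$ must satisfy
\[
\gamma\in \alpha - B(x,z)+A \quad\text{and}\quad \gamma\in \beta-B(y,z)+A .
\]
Both hold iff $B(x-y,z)\in \alpha-\beta+A$, with $\gamma$ then determined. As $z$ ranges over $V$, the map $z\mapsto B(x-y,z)$ is a surjective linear functional onto $\mathbb F_q$, so it takes each value exactly $Q/q$ times. Each coset of $A$ has $q/r$ elements, so the number of valid $z$ is $Q/r$.

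We then handle the excluded values $z=x$ and $z=y$. We have $B(x-y,x)=-B(y,x)=B(x,y)$ and $B(x-y,y)=B(x,y)$. So if $u\sim w$, both $z=x$ and $z=y$ satisfy the condition but must be discarded, giving $\lambda=Q/r-2$. If $u\not\sim w$, neither satisfies it, giving $\mu=Q/r$. For distinct $u,w$ in the same fibre there are no common neighbours, since $z$ would have to satisfy $0\in\alpha-\beta+A$.

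\textbf{Reading off the array.} The graph is connected and has diameter $3$: vertices in different fibres are at distance $\le 2$ since $\mu>0$, and fibre-mates are at distance $3$ via a neighbour of one of them. This requires $r>1$; for $r=2$ one should check $\mu>0$, which holds since $Q/r\ge 1$. Then $a_1=\lambda=Q/r-2$ gives $b_1=Q-1-1-a_1=(r-1)Q/r$, and $c_2=\mu=Q/r$. For a fibre-mate $w'$ of $u$, each neighbour of $w'$ lies in a different fibre and is at distance $\le 2$ from $u$. Hence $c_3=Q-1$, $a_3=0$, and $b_2=1$: a vertex at distance $2$ from $u$ has exactly one neighbour in $F_x$ other than $u$. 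Distance-regularity follows because all these counts are independent of the chosen pair, and antipodality holds with the fibres as antipodal classes. The vertex count is $rQ$.

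The main obstacle is the careful bookkeeping at $z=x,y$ in the $\lambda/\mu$ count, which is exactly where the identity $B(x-y,x)=B(x-y,y)=B(x,y)$ (from alternating-ness) is essential.
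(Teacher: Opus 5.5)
Your proof is correct and complete. The paper does not prove this statement itself (it is quoted from \cite{BCN89}), but your argument is the same common-neighbour count the paper carries out in the proof of Theorem~\ref{thm:DDG}: you solve $B(x-y,z)\in\alpha-\beta+A$ using the surjective functional $z\mapsto B(x-y,z)$, and $\gamma+A$ is then forced.

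In fact your computation is the case $\widetilde{D}=A$ (that is, $D=\{A\}$, $K=1$, $\Lambda=0$) of that proof with loops deleted. Counted with loops, every pair of vertices in distinct fibres has $q^{2t}/r$ common neighbours and fibre-mates have none. When $u\sim w$, the solutions $z=x$ and $z=y$ are $u$ and $w$ themselves, which is exactly your correction $\lambda=Q/r-2$.

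Two cosmetic points. First, rewrite the garbled sentence on loops: $B(x,x)=0\in A$, so the relation as written is reflexive, and the graph must be read with adjacency only between distinct vertices. Second, the separate remark about $r=2$ is unnecessary. Since $r$ divides $q$, you have $\mu=Q/r=q^{2t-1}|A|\ge q\ge 2$ for every admissible $r$.
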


\begin{corollary}[{\cite[Proposition 4.13]{HKM11}}] \label{co2.2}
The complements of all the distance-regular graphs from Theorem \ref{thm:SymplecticCovers1} are divisible design graphs.   
\end{corollary}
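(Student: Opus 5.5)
The plan is to prove the general fact behind Corollary \ref{co2.2}: the complement of an antipodal distance-regular graph of diameter $3$ with $c_2 = a_1$, i.e.\ $\lambda=\mu$, is a DDG whose canonical partition is the set of antipodal classes. We then check the hypothesis on the intersection array from Theorem \ref{thm:SymplecticCovers1}.

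Let $\Gamma=\Gamma(2t,q,A,r)$ and put $N=q^{2t}$. By Theorem \ref{thm:SymplecticCovers1}, $\Gamma$ has $v=rN$ vertices, valency $k=N-1$, and intersection numbers
\[
b_1=(r-1)N/r,\quad c_2=N/r,\quad a_1=k-b_1-c_1=N-1-(r-1)N/r-1=N/r-2 .
\]
The antipodal classes have size $n=r$ and there are $m=N$ of them. Indeed $k_3=k_0\,b_0b_1b_2/(c_1c_2c_3)=r-1$; alternatively, from the definition, the vertices $(\alpha+A,x)$ with fixed $x$ form a coclique of size $r$.

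We read off the common neighbours in $\Gamma$:
\begin{itemize}
\item adjacent vertices have $a_1=N/r-2$ common neighbours;
\item vertices at distance $2$ have $c_2=N/r$ common neighbours;
\item vertices at distance $3$ (same antipodal class) have $0$ common neighbours, since $b_2=1$ forces $\Gamma_1(x)\cap\Gamma_1(y)=\emptyset$ when $d(x,y)=3$.
\end{itemize}

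Now pass to the complement $\bar\Gamma$, which has valency $\bar k=v-1-k=rN-N$. For two distinct vertices $x,y$, the number of common neighbours in $\bar\Gamma$ is
\[
v-2-2k+\lambda_\Gamma(x,y)+2\cdot[x\sim_\Gamma y],
\]
by inclusion–exclusion on $V\setminus\{x,y\}$; the correction term appears because when $x\sim y$ each of $x,y$ lies in the other's $\Gamma$-neighbourhood. We evaluate this case by case.
\begin{itemize}
\item If $x\sim_\Gamma y$, we get $v-2-2k+a_1+2=v-2k+N/r-2$.
\item If $d(x,y)=2$, we get $v-2-2k+N/r$, which is the same value.
\item If $x,y$ are antipodal, we get $v-2-2k$.
\end{itemize}
So any two vertices in different antipodal classes have $\lambda_2=rN-2N+N/r-2$ common neighbours in $\bar\Gamma$. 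Any two distinct vertices in the same class have $\lambda_1=rN-2N$ common neighbours.

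Hence $\bar\Gamma$ is a DDG with parameters $(rN,\,rN-N,\,rN-2N,\,rN-2N+N/r-2,\,N,\,r)$. The only delicate point is the coincidence $a_1+2=c_2$, which is exactly what makes the two cross-class cases agree; this is immediate from the array. One should also require $\lambda_1\neq\lambda_2$ if properness matters, and here this holds whenever $N/r\neq 2$.
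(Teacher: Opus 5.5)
Your argument proves the corollary, and it is essentially the standard intersection-number argument behind \cite[Proposition 4.13]{HKM11}, which the paper cites without proof. The case analysis of common neighbours in $\bar\Gamma$ is right, and the key point is the one you isolate: $c_2=a_1+2$ makes the adjacent and distance-$2$ cases give the same count. Inside the paper the corollary can also be read off from Theorem \ref{thm:DDG} with the trivial difference set $D=(\mathbb{F}_q^+/A)\setminus\{A\}$ (see the remark on odd $q$ in Section \ref{sec:construction}). There $\widetilde D=\mathbb{F}_q\setminus A$, so $\Delta$ is exactly the complement of $\Gamma$. Your route is more general: it covers any antipodal $r$-cover of $K_N$ with $c_2=N/r$. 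The paper's route identifies the canonical partition directly as the fixed-$x$ sets.

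There are two slips you should fix.

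\textbf{The stated hypothesis.} Your opening sentence gives the hypothesis as $c_2=a_1$ ($\lambda=\mu$). That is the condition for $\Gamma$ itself to be a DDG, with $\lambda_1=0$. Under it, the complement is \emph{not} a DDG with respect to the antipodal classes, because the two cross-class counts $v-2k+a_1$ and $v-2-2k+c_2$ then differ by $2$. The condition you actually use, correctly, is $c_2=a_1+2$.

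\textbf{The value of $\lambda_2$.} Since $v-2k=rN-2(N-1)=rN-2N+2$, you get
\[ \lambda_2=v-2k+N/r-2=rN-2N+N/r=(r-1)^2N/r, \]
not $rN-2N+N/r-2$. The parameters are $(rN,(r-1)N,(r-2)N,(r-1)^2N/r,N,r)$, which agree with Theorem \ref{thm:DDG} for $K=r-1$, $\Lambda=r-2$. So $\lambda_2-\lambda_1=N/r>0$ always, and your caveat ``$N/r\neq 2$'' is spurious. For a check, take $q=2$, $t=1$, $r=2$: $\Gamma$ is the $3$-cube, and its complement has $\lambda_1=0$, $\lambda_2=2$.

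Two minor points. That antipodal vertices have no common neighbour follows from the triangle inequality, not from $b_2=1$. Also, the fixed-$x$ sets being cocliques of size $r$ does not by itself show they are the antipodal classes, though you do not need that.
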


\subsection{Difference sets}
A $(v,k,\lambda)$ \emph{difference set} is a subset $D$ of size $k$ in a group $G$ of order $v$ such that every non-identity element of $G$ can be expressed as a product $d_1\ast d_2^{-1}$ of elements of $D$ in exactly $\lambda$ ways, that is, when
$$
D\ast D^{-1} = k\cdot\{e\} + \lambda\cdot (G \setminus \{e\})
$$
holds.
Note that, for a $(v,k,\lambda)$ difference set $D$ in a group $G$, the complementary set $G \setminus D$ is a $(v,v-k, v-2k+\lambda)$ difference set.
For any group $G$ of order $v$ and any element $g \in G$,
the set $\{g\}$ is a $(v,1,0)$ difference set and the set $G \setminus \{g\}$ is a $(v,v-1,v-2)$ difference set;
such difference sets are called \emph{trivial}. A difference set $D$ is called \emph{reversible} if $D^{-1} = D$.

Difference sets with parameters $(4u^2, 2u^2\pm u,u^2\pm u)$, where $u$ is a positive integer, are called \emph{Hadamard difference sets}. Such difference sets are closely related to Menon designs and regular Hadamard matrices.
There is a fundamental result \cite[p.366]{BJL99} due to Mann (1965) stating that if $D$ is a non-trivial difference set in a 2-group, then the order of this group is an even power of 2 and this difference set is an Hadamard difference set. 

\subsection{Cayley graphs}
Let $G$ be a group and $S$ be an identity-free subset such that $S^{-1} = S$. The \emph{Cayley graph} on the group $G$ with \emph{connection set} $S$, denoted by $\operatorname{Cay}(G,S)$, is the graph whose vertex set is $G$ and two vertices $x,y$ are adjacent if and only if $xy^{-1}$ belongs to $S$. 

A characterisation of  divisible design graphs that are Cayley graphs was given in \cite{KS21}.

\section{New construction of divisible design graphs}\label{sec:construction}

The construction of the distance-regular graphs from Theorem \ref{thm:SymplecticCovers1} can be modified to produce divisible design graphs.

\begin{theorem}\label{thm:DDG}
Let $q$ be a prime power. 
Let $V$ be a vector space of arbitrary even positive dimension $2t$ over $\mathbb{F}_q$ provided with a nondegenerate symplectic form $B$. Let $A$ be a subgroup of index $r > 1$ in the additive group of $\mathbb{F}_q^+$. 
Let $D = \{c_1+A,\ldots,c_K+A\}$ be a zero-free reversible $(r,K,\Lambda)$ difference set in the (elementary abelian) quotient group $\mathbb{F}_q^+/A$ and let 
$\widetilde{D} = (c_1+A) \cup \ldots \cup (c_K+A)$.
Then the graph $\Delta(2t,q,A,r,D)$ with the vertex set $\{(\alpha+A,x) : \alpha \in \mathbb{F}_q, ~ x \in V\}$ where $(\alpha+A,x) \sim (\beta+A,y)$ if and only if $B(x,y) \in \alpha - \beta + \widetilde{D}$ is a divisible design graph with parameters $(v,k,\lambda_1,\lambda_2,m,n)$, where
\[  v = q^{2t}\cdot r,\quad  k = q^{2t}\cdot K, \quad \lambda_1 = q^{2t} \cdot \Lambda, \quad \lambda_2 = q^{2t}\cdot K^2 / r, \quad m = q^{2t}, \quad n = r.  \]
\end{theorem}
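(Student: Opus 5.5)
We count common neighbours directly, working in the quotient group $Q=\mathbb{F}_q^+/A$ of order $r$; for $\gamma\in\mathbb{F}_q$ write $\bar\gamma=\gamma+A$. The adjacency condition $B(x,y)\in\alpha-\beta+\widetilde D$ says exactly that $\overline{B(x,y)}-\bar\alpha+\bar\beta\in D$, so it depends only on the cosets, and the graph is well defined. It is undirected: $B(y,x)=-B(x,y)$ and $D=-D$ (reversibility), so the condition for $(\beta+A,y)\sim(\alpha+A,x)$ reads $-\overline{B(x,y)}+\bar\alpha-\bar\beta\in D$, which is equivalent to the original one. It is loopless because $B(x,x)=0$ and $0\notin D$. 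As canonical partition we take the $m=q^{2t}$ classes $\{(\alpha+A,x):\alpha\in\mathbb{F}_q\}$, one for each $x\in V$, each of size $n=r$. We have $v=rq^{2t}$.

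For the valency, fix $(\bar\alpha,x)$. For each $y\in V$, the admissible $\bar\beta$ satisfy $\bar\beta\in \bar\alpha-\overline{B(x,y)}+D$, a translate of $D$. So there are exactly $K$ of them, and $k=q^{2t}K$.

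For common neighbours, fix two vertices $(\bar\alpha,x)$ and $(\bar\alpha',x')$ and count pairs $(\bar\beta,y)$. For a fixed $y$, the number of valid $\bar\beta$ is
\[
\bigl|(\bar\alpha-\overline{B(x,y)}+D)\cap(\bar\alpha'-\overline{B(x',y)}+D)\bigr|,
\]
the size of the intersection of $D$ with a translate $D+g_y$, where
\[
g_y=\bar\alpha'-\bar\alpha-\overline{B(x',y)}+\overline{B(x,y)}=\bar\alpha'-\bar\alpha+\overline{B(x-x',y)}.
\]
Since $D$ is a difference set in the abelian group $Q$, this size is $K$ if $g_y=0$ and $\Lambda$ otherwise. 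We now split into cases.

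\emph{Same class} ($x=x'$, $\bar\alpha\neq\bar\alpha'$). Here $g_y=\bar\alpha'-\bar\alpha\neq 0$ for every $y$, so each of the $q^{2t}$ choices of $y$ contributes $\Lambda$, and $\lambda_1=q^{2t}\Lambda$.

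\emph{Different classes} ($x\neq x'$). Since $B$ is nondegenerate, $y\mapsto B(x-x',y)$ is a surjective linear functional $V\to\mathbb{F}_q$, so each value in $\mathbb{F}_q$ is taken $q^{2t-1}$ times. Hence $\overline{B(x-x',y)}$ takes each coset of $A$ exactly $q^{2t-1}|A|=q^{2t}/r$ times, and so $g_y=0$ for exactly $q^{2t}/r$ vectors $y$. The count is therefore
\[
\frac{q^{2t}}{r}K+\Bigl(q^{2t}-\frac{q^{2t}}{r}\Bigr)\Lambda=\frac{q^{2t}}{r}\bigl(K+(r-1)\Lambda\bigr).
\]
The standard difference set identity $K(K-1)=\Lambda(r-1)$ gives $K+(r-1)\Lambda=K^2$, so $\lambda_2=q^{2t}K^2/r$.

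The computation is routine. The only points needing care are checking that adjacency is symmetric and well defined on cosets (this uses reversibility, zero-freeness and the alternating property of $B$), and the uniform distribution of $B(x-x',y)$ over cosets, which comes from nondegeneracy.
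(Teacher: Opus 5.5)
Your proof is correct, but you organise the double count in the opposite order from the paper. The paper fixes the pair $(c_i+A,c_j+A)\in D\times D$ that witnesses the two adjacencies of a common neighbour $(\gamma+A,z)$, and then counts the admissible vectors $z$:
\begin{itemize}
\item in the same-class case, the difference-set property leaves $\Lambda$ pairs, each admitting all $q^{2t}$ vectors $z$;
\item in the different-class case, all $K^2$ pairs occur, each admitting $q^{2t}/r$ vectors $z$ by nondegeneracy. A separate remark notes that a common neighbour determines $(i,j)$.
\end{itemize}
You instead fix the vector coordinate of the common neighbour and count its coset coordinate as $|D\cap(D+g_y)|$. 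This gives one formula for both cases, and the question of whether $(i,j)$ is determined never comes up. In exchange, you need the identity $K(K-1)=\Lambda(r-1)$ to get $\lambda_2$.

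The paper's ordering shows that $\lambda_2=q^{2t}K^2/r$ holds for every zero-free reversible subset $D$ of size $K$; the difference-set hypothesis is needed only for $\lambda_1$. You can recover this within your framework. You already proved that $g_y$ takes every value of $\mathbb{F}_q^+/A$ exactly $q^{2t}/r$ times, not just the value $0$. Since $\sum_{g}|D\cap(D+g)|=K^2$ for any $K$-subset, the different-class count is then $\frac{q^{2t}}{r}K^2$ directly, without the identity.

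In the other direction, your version makes explicit two points the paper passes over quickly: adjacency depends only on cosets of $A$, and symmetry uses $B(y,x)=-B(x,y)$ together with $D=-D$.
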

\begin{proof}
First, note that since $D$ is a reversible difference set, we have $-\widetilde{D} = \widetilde{D}$ and so the graph $\Delta(2t,q,A,r,D)$ is undirected. Similarly, since $D$ is zero-free, the graph  $\Delta(2t,q,A,r,D)$ has no loops. 

\medskip \noindent Clearly, the number of vertices of the graph is  $q^{2t} \cdot r$.  

\medskip \noindent We fix a vertex $(\alpha + A,x)$ and count its neighbours $(\beta + A,y)$. We have $(\alpha+A,x) \sim (\beta + A,y)$ if and only if $B(x,y) \in \alpha-\beta + \widetilde{D}$. Choosing $y \in V$ arbitrary ($q^{2t}$ choices), there are $|\widetilde{D}| = K \cdot |A|$ possibilities for $\beta$ such that the latter condition is satisfied. The $q^{2t} K \cdot |A|$ pairs $(\beta,y)$ will give rise to $k=q^{2t} K$ neighbours $(\beta+A,y)$ of $(\alpha+A,x)$.

\medskip \noindent Consider two distinct vertices of the form $(\alpha+A,x)$ and $(\beta+A,x)$. Then $(\beta-\alpha) + A \not= A$. We count the number of common neighbours $(\gamma+A,z)$ of these vertices. For such a vertex to be a common neighbour, we need that
\begin{eqnarray}
B(x,z) & \in & \alpha - \gamma + c_i +A, \label{eq1} \\
B(x,z) & \in & \beta - \gamma + c_j + A, \label{eq2}
\end{eqnarray}
for some $i,j \in \{ 1,2,\ldots,K \}$. These equations imply that $(c_i+A)-(c_j+A) = (\beta-\alpha) + A \not= A$, and so we know that there are $\Lambda$ possibilities for $(c_i,c_j)$. For each of these $\Lambda$ possibilities for $(c_i,c_j)$, (1) implies (2). Taking $z \in V$ arbitrary, we see that $(1)$ holds if $\gamma+A = -B(x,z) + \alpha + c_i +A$. All together, there are thus $\Lambda \cdot |V| = q^{2t} \Lambda$ common neighbours $(\gamma+A,z)$ of the two mentioned vertices.

\medskip \noindent Consider two (distinct) vertices of the form $(\alpha+A,x)$ and $(\beta + A,y)$, where $x \not= y$. We count the number of common neighbours $(\gamma+A,z)$ of these vertices. For such a vertex to be a common neighbour, we need
\begin{eqnarray}
B(x,z) & \in & \alpha - \gamma + c_i + A, \label{eq3} \\ 
B(y,z) & \in & \beta - \gamma + c_j + A, \label{eq4}
\end{eqnarray}
for some $i,j \in \{ 1,2,\ldots,K \}$. These equations imply that 
\begin{eqnarray}
B(x-y,z) & \in & \alpha -\beta+c_i-c_j+A. \label{eq5} 
\end{eqnarray}
If (\ref{eq5}) holds, then (\ref{eq3}) implies (\ref{eq4}). Now, take $i,j \in \{ 1,2,\ldots,K \}$ be arbitrary ($K^2$ possibilities). As $x-y \not= 0$, there are $|A| \cdot q^{2t-1} = \frac{q^{2t}}{r}$ solutions for $z$ that satisfy (\ref{eq5}), and for each such $z$, (\ref{eq3}) holds if we take $\gamma+A = -B(x,z) + \alpha + c_i + A$. All together, this results in $K^2 \frac{q^{2t}}{r}$ common neighbours $(\gamma+A,z)$ of $(\alpha+A,x)$ and $(\beta+A,y)$ (note that any such common neighbour uniquely determines $i$ and $j$ by (\ref{eq3}) and (\ref{eq4})).
  
\medskip \noindent It is now clear that $\Delta(2t,q,A,r,D)$ is a DDG for the mentioned parameters $(v,k,\lambda_1,\lambda_2)$, with two vertices $(\alpha+A,x)$ and $(\beta+A,y)$ belonging to the same class whenever $x=y$. There are thus $m=q^{2t}$ classes, each containing $n=r$ vertices.
\end{proof}

\begin{remark} 
{\em Many examples of DDG's can be obtained as in Theorem \ref{thm:DDG}. Let $q$ be a power of $2$. Then the difference set $D$ from Theorem \ref{thm:DDG} is always reversible and is either trivial or Hadamard. The connection sets of binary hyperbolic and elliptic affine polar graphs \cite[Section 3.3]{BV22}, as well as the connection sets of the collinearity graphs of certain Desarguesian nets are examples of the required Hadamard difference set $D$. Moreover, the support of a Boolean function is a Hadamard difference set if and only if this Boolean function is a bent function \cite[p.185]{BV22}. The total number of bent functions with $2, 4, 6, 8, 10$ variables is, respectively, 2, 8, 896, 5425430528, 99270589265934370305785861242880 (according to OEIS (A004491). Non-equivalent bent functions can give equivalent difference sets though.}    
\end{remark}

\begin{remark}
{\em Let $q$ be a power of $2$. Let $D$ be a difference set in $\mathbb{F}_q^+/A$ containing $A$. Then for any element $y \in (\mathbb{F}_q^+/A) \setminus D$, the set $y+D$ is a zero-free difference set, so the requirement for the difference set $D$ to be zero-free is not so restrictive in this case.}
\end{remark}

\begin{remark}
{\em Let $q$ be an odd prime power. The only reversible zero-free difference set we know in this case is the trivial difference set $D = (\mathbb{F}_q^+/A) \setminus \{A\}$. The resulting divisible design graph is then just the complement of the distance-regular graph from Theorem \ref{thm:SymplecticCovers1}, which is not new in view of \cite[Proposition 4.13]{HKM11} (see Corollary \ref{co2.2}).}      
\end{remark}

\section{Isomorphism with Cayley graphs}\label{sec:SomeGraphsAreCayley}

In this section we show that, for every $q$ that is a power of 2, the distance-regular graphs $\Gamma(2t,q,A,r)$ from Theorem \ref{thm:SymplecticCovers1} and the divisible design graphs $\Delta(2t,q,A,r,D)$ from Theorem \ref{thm:DDG} are Cayley graphs over the elementary abelian 2-group of order $rq^{2t}$.  

We continue with the notation introduced in Theorems \ref{thm:SymplecticCovers1} and \ref{thm:DDG}, but we suppose here that $q$ is even. Also, we put $D = \{ A \}$ and $\widetilde{D}=A$ in the situation of Theorem \ref{thm:SymplecticCovers1}. We denote by $Q$ a quadratic form on $V$ for which the associated alternating bilinear form coincides with $B$, i.e. $B(x,y) = Q(x+y) + Q(x) + Q(y)$ for vectors $x,y \in V$.

Let $G$ denote the quotient group $\mathbb{F}_q^+/A$. Then $G \times V$ is an elementary abelian 2-group. Let $\Omega(2t,q,A,r,D)$ be the Cayley graph defined over the group $G \times V$, with connection set
\[ S = \{ (\alpha+A,x) \in (G \times V) \setminus \{ (A,0) \} \, | \, \alpha + Q(x) + A \in D \}. \]
Note that $-S=S$ as $q$ is even.

\begin{theorem} \label{thm:IsomorphismWithCayleyGraphs}
\begin{enumerate}
\item[$(1)$] If $D = \{A\}$, then the graph $\Gamma(2t,q,A,r)$ is isomorphic to $\Omega(2t,q,A,r,D)$.
\item[$(2)$] If $D$ is a zero-free difference set in $\mathbb{F}_q^+/A$, then the graph $\Delta(2t,q,A,r,D)$ is isomorphic to $\Omega(2t,q,A,r,D)$.
\end{enumerate}
\end{theorem}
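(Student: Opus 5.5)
Both parts follow from one explicit bijection, since part $(1)$ is the special case of part $(2)$ with $D=\{A\}$ and $\widetilde{D}=A$. In that case the adjacency rule of $\Delta$ becomes the rule of $\Gamma(2t,q,A,r)$, and the loop-free condition ($0\notin\widetilde D$) is replaced by the requirement that a vertex is not adjacent to itself. We therefore treat both cases uniformly via the rule $(\alpha+A,x)\sim(\beta+A,y)$ iff $B(x,y)\in\alpha-\beta+\widetilde{D}$ with $x\neq y$ or $\alpha+A\neq\beta+A$, which for zero-free $D$ is automatic.

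\textbf{The map.} Define $\varphi: G\times V\to G\times V$ by
\[
\varphi(\alpha+A,x)=(\alpha+Q(x)+A,\;x).
\]
It is well defined, because changing $\alpha$ within $\alpha+A$ does not change the image. It is a bijection, with inverse $(\beta+A,y)\mapsto(\beta+Q(y)+A,y)$; in characteristic $2$, minus equals plus.

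\textbf{Adjacency check.} We verify that $\varphi$ maps edges to edges. In $\Omega$, the images $\varphi(\alpha+A,x)$ and $\varphi(\beta+A,y)$ are adjacent iff their difference
\[
(\alpha+\beta+Q(x)+Q(y)+A,\;x+y)
\]
lies in $S$. By the definition of $S$, this holds iff the difference is nonzero and
\[
\alpha+\beta+Q(x)+Q(y)+Q(x+y)+A\in D.
\]
Since $Q(x+y)+Q(x)+Q(y)=B(x,y)$, the condition reads $\alpha+\beta+B(x,y)+A\in D$, equivalently $B(x,y)\in\alpha+\beta+\widetilde D=\alpha-\beta+\widetilde D$ in characteristic $2$. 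This is exactly the adjacency rule of $\Delta(2t,q,A,r,D)$, or of $\Gamma$ when $\widetilde D=A$.

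\textbf{Nonzero condition.} The difference is zero iff $x=y$ and $\alpha+A=\beta+A$, that is, iff the two vertices coincide. So excluding $(A,0)$ from $S$ corresponds precisely to excluding loops, which handles part $(1)$. In part $(2)$ the exclusion is vacuous, because $D$ is zero-free and so $(A,0)\notin S$ anyway.

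\textbf{Main obstacle.} The only delicate point is choosing the correct twist by $Q$, i.e.\ the map $\varphi$ above, which turns the non-Cayley-looking rule into a translation-invariant one. After that, the computation is routine. One also needs that such a $Q$ exists, which is standard: a quadratic form polarizing to a given alternating form exists in characteristic $2$, for instance $Q=\sum x_iy_i$ in a symplectic basis. Finally, $\Omega$ is a genuine Cayley graph because $S=-S$ holds trivially in an elementary abelian $2$-group.
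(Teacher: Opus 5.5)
Your proof is correct and follows the paper's argument: you use the same twisted bijection $(\alpha+A,x)\mapsto(\alpha+Q(x)+A,x)$ and the same identity $Q(x+y)+Q(x)+Q(y)=B(x,y)$ to turn the adjacency rule into membership of the difference in $S$. Your extra remarks (well-definedness, the explicit inverse, matching the exclusion of $(A,0)$ from $S$ with the absence of loops, and the existence of $Q$ via a symplectic basis) fill in details that the paper leaves implicit.
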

\begin{proof}
Let $\phi$ be the bijection from the vertex set of $\Gamma(2t,q,A,r)$ (respectively, $\Delta(2t,q,A,r,D)$) to the vertex set of $\Omega(2t,q,A,r,D)$ defined by the rule
\[ \phi((\alpha+A,x)) = (\alpha+Q(x)+A,x). \]
Then two distinct vertices $(\alpha+A,x)$ and $(\beta+A,y)$ are adjacent in $\Gamma(2t,q,A,R)$ (resp. $\Delta(2t,q,A,r,D)$) whenever $B(x,y) \in \alpha + \beta + \widetilde{D}$, or equivalently $B(x,y)+\alpha+\beta+A \in D$.

On the other hand, the vertices $\phi(\alpha+A,x) = (\alpha + Q(x) + A,x)$ and $\phi(\beta + A,y) = (\beta + Q(y) + A,y)$ are adjacent in $\Omega(2t,q,A,r,D)$ if and only if $(\alpha+\beta + Q(x) + Q(y) + A,x+y) \in S$, i.e. if and only if
\[ \alpha + \beta + Q(x) + Q(y) + Q(x+y) + A = \alpha + \beta + B(x,y) + A \in D. \]
This proves the claims.
\end{proof}

\begin{remark}
{\em Suppose that $r = q$, $A=\{ 0 \}$ and $D = \{ A \}$ (special case of Theorem \ref{thm:IsomorphismWithCayleyGraphs}(1)). Then we can identify $G$ with $\mathbb{F}_q$, $G \times V$ with $\mathbb{F}_q^{2t+1}$ and for $Q$, we can take the quadratic form $x_1x_2+\cdots+x_{2t-1}x_{2t}$ with respect to some basis of $V$. The connection set $S$ then consists of all $(x_0,x_1,\ldots,x_{2t})$ for which $x_0 + x_1x_2 + \cdots + x_{2t-1} x_{2t}=0$. In view of the automorphism  $(x_0,x_1,\ldots,x_{2t}) \mapsto (x_0^2,x_1,\ldots,x_{2t})$ of $G \times V$, the distance-regular graph $\Omega(2t,q,A,r,D)$ can be viewed as a ``parabolic affine polar graph'', which is not strongly regular while the hyperbolic affine polar graphs and elliptic affine polar graphs are known to be strongly regular \cite[Section 3.3]{BV22}.} 
\end{remark}

\begin{remark}
{\em Note that the distance-regular graphs from Theorem \ref{thm:SymplecticCovers1} on $27$ and $125$ vertices are Cayley graphs over non-abelian groups but are not Cayley graphs over the elementary abelian groups of order $27$ and $125$, respectively. So Theorem \ref{thm:IsomorphismWithCayleyGraphs} does not directly extend to the case of odd characteristic.}
\end{remark}

\begin{remark}
{\em
In this remark, we would like to give some comments on the context to which this paper belongs and announce some future results involving two coauthors of the present paper (Bart De Bruyn and Sergey Goryainov). First, we plan to extend the results of this paper by replacing the difference set in Theorem \ref{thm:DDG} with a divisible difference set. In particular, we can announce infinitely many new divisible design graphs that are Cayley graphs and a new recursive construction of divisible difference sets; these results are based on some observations made after submission of this paper. Second, we note that difference sets (more generally, divisible difference sets) can be plugged into another construction of antipodal distance-regular graphs (see \cite[Proposition 12.5.3]{BCN89}); together with the idea of increasing the dimension of the vector space, this would serve as a wide generalisation of a result from \cite{MR26}, which was also independently obtained by Mikhail Muzychuk \& Grigory Ryabov and Bart De Bruyn, Sergey Goryainov \& Weihao Yan. We thus announce infinitely many new divisible design graphs.
Third, we announce that difference sets can be similarly plugged into three more infinite families of antipodal distance-regular graphs (see \cite{C91} and \cite[Theorem 1]{T15}; also, see \cite{T22}).
}    
\end{remark}

\section*{Acknowledgments}

\noindent Bart De Bruyn and Sergey Goryainov are supported by the Natural Science Foundation of Hebei Province (A2023205045) and the 111 Center (Grant No.D26018). Sergey Goryainov also thanks the Special Research Fund of Ghent University (bof/baf/4y/2024/01/354) for supporting his visits to Ghent University in September 2025 and January-February 2026.


\begin{thebibliography}{99}

\bibitem{BJL99} T. Beth, D. Jungnickel, H. Lenz, \emph{Design Theory. Vol I}, Second edition, Encyclopedia of Mathematics and its Applications 69, Cambridge University Press, Cambridge, 1999.

\bibitem{BDG25}
A. Bhowmik, B. De Bruyn, S. Goryainov, \emph{Divisible design graphs with selfloops}, Discrete Math. 349 (2026), no. 3, Paper No. 114824, 15 pp. \url{https://doi.org/10.1016/j.disc.2025.114824}

\bibitem{BG24} A. Bhowmik, S. Goryainov, \emph{Divisible design graphs from symplectic graphs over rings with precisely three ideals}, arXiv:2412.04962, \url{https://arxiv.org/abs/2412.04962}

\bibitem{B77} R. C. Bose, \emph{Symmetric group divisible designs with the dual property},  J. Statist. Plann. Inference 1 (1977), no. 1, 87--101. \url{https://doi.org/10.1016/0378-3758(77)90008-8}

\bibitem{BCN89} A. E. Brouwer,  A. M. Cohen, A. Neumaier, \emph{Distance-regular  graphs}, Springer-Verlag, Berlin, 1989.

\bibitem{BV22} A. E. Brouwer, H. Van Maldeghem, \emph{Strongly regular graphs}, Encyclopedia of Mathematics and its Applications 182, Cambridge University Press, Cambridge, 2022. 

\bibitem{C91}
P.J. Cameron, \emph{Covers of graphs and EGQs}, Discrete Math. 97(1–3) (1991) 83--92.
\url{https://doi.org/10.1016/0012-365X(91)90424-Z}

\bibitem{CH14} D. Crnkovi\'c, W. H. Haemers, \emph{Walk-regular divisible design graphs}, Des. Codes Cryptogr. 72 (2014), no. 1, 165--175. \url{https://doi.org/10.1007/s10623-013-9861-0}

\bibitem{CS22} D. Crnkovi\'c, A. \v{S}vob, \emph{New constructions of divisible design Cayley graphs}, Graphs Combin. 38 (2022), no. 1, Paper No. 17, 8 pp. \url{https://doi.org/10.1007/s00373-021-02440-4}

\bibitem{DGHS24} B. De Bruyn, S. Goryainov, W. H. Haemers, L. Shalaginov, \emph{Divisible design graphs from the symplectic graph},  Des. Codes Cryptogr. 93 (2025), no. 5, 1401--1424. \url{https://doi.org/10.1007/s10623-024-01557-w}

\bibitem{GK24} A. L. Gavrilyuk, V. V. Kabanov,  \emph{Strongly regular graphs decomposable into a divisible design graph and a Hoffman coclique},  Des. Codes Cryptogr. 92 (2024), no. 5, 1379--1391. \url{https://doi.org/10.1007/s10623-023-01348-9}

\bibitem{GK25} A. L. Gavrilyuk, V. V. Kabanov, \emph{Strongly regular graphs decomposable into a divisible design graph and a Delsarte clique}, Des. Codes Cryptogr. 93 (2025), no. 6, 2177--2189. \url{https://doi.org/10.1007/s10623-024-01563-y}

\bibitem{GHKS19} S. Goryainov, W. H. Haemers, V. V. Kabanov, L. Shalaginov, \emph{Deza graphs with parameters $(n, k, k-1, a)$ and $\beta = 1$},  J. Combin. Des. 27 (2019), no. 3, 188--202. \url{https://doi.org/10.1002/jcd.21644}

\bibitem{GHKL25} S. Goryainov, W. H. Haemers, E. V. Konstantinova, H. Li, \emph{Thin divisible designs graphs: an interplay between fixed-point free involutions of $(v,k,\lambda)$-graphs and symmetric weighing matrices}, arXiv:2512.16653, \url{https://arxiv.org/abs/2512.16653}

\bibitem{HKM11} W. H. Haemers, H. Kharaghani, M. A. Meulenberg, \emph{Divisible design graphs}, J. Combin. Theory Ser. A 118 (2011), no. 3, 978--992.  \url{https://doi.org/10.1016/j.jcta.2010.10.003}

\bibitem{K22} V. V. Kabanov, \emph{New versions of the Wallis-Fon-Der-Flaass construction to create divisible design graphs}, Discrete Math. 345 (2022), no. 11, Paper No. 113054, 9 pp. \url{https://doi.org/10.1016/j.disc.2022.113054}

\bibitem{K23} V. V. Kabanov, \emph{A new construction of strongly regular graphs with parameters of the complement symplectic graph}, Electron. J. Combin. 30 (2023), no. 1, Paper No. 1.25, 9 pp. \url{https://doi.org/10.37236/11343}

\bibitem{K26} V. V. Kabanov, \emph{Construction of divisible design graphs using affine designs}, Discrete Math. 349 (2026), no. 2, Paper No. 114717, 7 pp. \url{https://doi.org/10.1016/j.disc.2025.114717}

\bibitem{KS21} V. V. Kabanov, L. Shalaginov, \emph{On divisible design Cayley graphs}, Art Discrete Appl. Math. 4 (2021), no. 2, Paper No. 2.02, 9 pp. \url{https://doi.org/10.26493/2590-9770.1340.364}

\bibitem{M08} M. A. Meulenberg, \emph{Divisible design graphs}, Master's thesis, Tilburg University (2008). \url{http://alg.imm.uran.ru/dezagraphs/Divisible_design_graphs_M.A._Meulenberg.pdf}

\bibitem{MR26} M. Muzychuk, G. Ryabov, \emph{Directed strongly regular graphs and divisible design graphs from Tatra association schemes}, arXiv:2601.09955, \url{https://arxiv.org/abs/2601.09955}

\bibitem{P22} D. Panasenko, \emph{The vertex connectivity of some classes of divisible design graphs},  Sib. Elektron. Mat. Izv. 19 (2022), no. 2, 426--438. \url{http://semr.math.nsc.ru/v19/n2/p426-438.pdf}

\bibitem{PS22} D. Panasenko, L. Shalaginov, \emph{Classification of divisible design graphs with at most $39$ vertices},  J. Combin. Des. 30 (2022), no. 4, 205--219. \url{https://doi.org/10.1002/jcd.21818}

\bibitem{R26} G. Ryabov, \emph{Divisible design graphs from Higmanian association schemes}, arXiv:2601.18370, \url{https://arxiv.org/abs/2601.18370}

\bibitem{S21} L. Shalaginov, \emph{Divisible design graphs with parameters $(4n, n+2, n-2, 2, 4, n)$ and $(4n, 3n-2, 3n-6, 2n-2, 4, n)$},  Sib. Elektron. Mat. Izv. 18 (2021), no. 2, 1742--1756. \url{https://doi.org/10.33048/semi.2021.18.134}

\bibitem{T15}
L. Y. Tsiovkina, \emph{Two new infinite families of arc-transitive antipodal distance-regular graphs of diameter three with related to groups $Sz(q)$ and $^2G_2(q)$}, Journal of Algebraic Combinatorics 41, 1079--1087 (2015).

\bibitem{T22} L. Tsiovkina, \emph{Covers of complete graphs and related association schemes}, J. Combin. Theory Ser. A 191 (2022), Paper No. 105646, 33 pp. \url{https://doi.org/10.1016/j.jcta.2022.105646}

\end{thebibliography}
\end{document}